\newcommand{\RR}{\mathbf R^n}
\newcommand{\rr}[1]{\mathbf R^{#1}}
\numberwithin{equation}{section}
\newtheorem{thm}{Theorem}
\numberwithin{thm}{section}
\newtheorem{lemma}[thm]{Lemma}
\newtheorem{ex}[thm]{Example}
\newtheorem{anm}[thm]{Remark}
\author{Karoline Johansson}
\title{A counter example on nontangential convergence for oscillatory
integrals} 
\keywords{Generalized time-dependent Schr{\"o}dinger equation, nontangential convergence}
\begin{document}

\begin{abstract}

Consider the solution of the time-dependent Schr{\"o}\-dinger
equation with initial data $f$. It is shown in \cite{artikel} that
there exists $f$ in the Sobolev space $H^s(\RR), \; s=n/2$ such
that tangential convergence can not be widened to convergence
regions. In this paper we show that the corresponding result holds
when $-\Delta_x$ is replaced by an operator $\varphi(D)$, with
special conditions on $\varphi$. 
\end{abstract}
\maketitle


\section{Introduction}

In this paper we generalize previous work by Sj{\"o}gren and
Sj{\"o}lin \cite{artikel} about non-existence of non-tangential
convergence for the solution $u=S^{\varphi}f$ to the generalized time-dependent
Schr{\"o}dinger equation
\begin{equation}\label{generaliserad}
(\varphi(D)+i\partial_t )u =0,
\end{equation}
with the initial condition $u(x,0)=f(x)$. Here $\varphi$ should be
real-valued and its radial derivatives of first and second order ($\varphi' =\varphi'_r$ and $\varphi'' =\varphi''_{rr}$) should be continuous, outside a compact set containing origin. Furthermore, we will require some appropriate conditions on the growth $\varphi'$ and $\varphi''$. (See \eqref{funktionerna} and \eqref{deriverade
funktioner} for exact conditions on $\varphi$.) In particular the
function $\varphi(\xi)=|\xi|^a$ will satisfy these conditions, for
$a> 1$.

\par
\vspace{0.5 cm}

 For $\varphi(\xi)=|\xi|^2$ it was shown in
\cite{artikel} that there exists a function $f$ such
that near the vertical line $t \mapsto (x,t)$ through an arbitrary
point $(x,0)$ there are points accumulating at $(x,0)$ such
that the solution of equation \eqref{generaliserad} takes values
far from $f$. This means that the solution of the time-dependent Schr{\"o}dinger equation with
initial condition $u(x,0)=f(x)$ does not converge non-tangentially
to $f$. Therefore we can not consider regions of convergence.

\par
\vspace{0.5 cm}

In this paper, we prove that this property holds for more general
functions $\varphi(\xi)$ of the type described above. In the proof we use some ideas by Sj{\"o}gren and Sj{\"o}lin in \cite{artikel} in combination with new estimates, to construct a counter example. Some ideas can also be found in Sj{\"o}lin
\cite{{LP}, {Counter}} and Walther \cite{{Sharp},{Sharpmax}}, and some related results are given in Bourgain \cite{Bourgain}, Kenig, Ponce and Vega
\cite{Kenig}, and Sj{\"o}lin \cite{{ref2},{Hom}}.

\par
\vspace{0.5 cm}

Existence of regions of convergence has been studied before for
other equations. For example, Stein and Weiss consider in
\cite[Theorem $3.16$]{SteinWeiss} Poisson integrals acting on
Lebesgue spaces. These
operators are related to the operator $S^{\varphi}$.
\par
\vspace{0.5 cm}

For an appropriate function $\varphi$ on $\RR$, let $S^{\varphi}$
be the operator acting on functions $f$ defined by
\begin{equation}\label{allmSaf} f \mapsto
\mathcal{F}^{-1}(\exp(i t \varphi(\xi))\mathcal{F}f),
\end{equation}
where $\mathcal{F}f$  is the Fourier transform of $f$, which takes the form
\begin{equation}
\widehat{f}(\xi)=\mathcal{F}f(\xi)\equiv\int_{\RR}e^{-i x \cdot \xi}f(x)\, d x,
\end{equation}
when $f\in L^1(\RR)$. This means that, if $\widehat{f}$ is an
integrable function, then $S^{\varphi}$ in \eqref{allmSaf} takes
the form
\begin{equation}\label{Saf}
S^{\varphi} f(x,t) = \frac{1}{(2 \pi)^n} \int_{\RR} {e^{i x\cdot
\xi}e^{it \varphi(\xi)}\widehat{f}(\xi)}\, d \xi,\quad x \in
\RR,\quad t \in \mathbf{R}.
\end{equation}

\par
\vspace{0.5 cm}

If $\varphi(\xi)=|\xi|^2$ and $f$ belongs to the Schwartz class
$\mathcal{S}(\RR)$, then $S^{\varphi} f$ is the solution to the
time-dependent Schr{\"o}dinger equation $(-\Delta_x +i\partial_t)u
=0$ with the initial condition $u(x,0) = f(x)$.

\par
\vspace{0.5 cm}
For more general
appropriate $\varphi$, for which the equation
\eqref{generaliserad} is well-defined, the expression $S^{\varphi}
f$ is the solution to the generalized time-dependent
Schr{\"o}dinger equation \eqref{generaliserad} with the initial
condition $u(x,0) = f(x)$. Note here that $S^{\varphi}f$ is
well-defined for any real-valued measurable $\varphi$ and $f\in \mathcal{S}$. On the other hand, it might be difficult to interpret \eqref{generaliserad} if for example $\varphi\not\in L^1_{loc}$.
\par
\vspace{0.5 cm}

In order to state the main result we need to specify the
conditions on $\varphi$ and give some definitions. The function $\varphi$ should satisfy the
conditions
\begin{gather}\label{funktionerna}
\liminf_{r\to \infty}(\inf_{|\omega|=1}|\varphi'(r,\omega)|)=\infty,
\end{gather}
and
\begin{gather}\label{deriverade funktioner}
\sup_{r\geq
R}\Big(\sup_{|\omega|=1}\frac{r|\varphi''(r,\omega)|}{|\varphi'(r,\omega)|^2(\log
r)^{3/4}}\Big)<C.
\end{gather}

Here $\varphi'(r\omega)=\varphi'(r,\omega)$ denotes the derivative
of $\varphi(r,\omega)$ with respect to $r$, and similarly for higher orders of derivatives.

\par
\vspace{0.5 cm}

We let $H^s(\RR)$ be the Sobolev space
of distributions with $s\in \mathbf{R}$ derivatives in $L^2$.
That is $H^s(\RR)$ consists of all $f\in \mathcal{S}'(\RR)$ such that
\begin{equation}\label{sobolev}
\|f\|_{H^s(\RR)}\equiv \big(\int_{\RR}(1 + |\xi |^2)^s
|\widehat{f} (\xi )|^2  \, d \xi \big)^{1/2} < \infty.
\end{equation}
%

%
\begin{thm} \label{a>1} Assume that the function
$\gamma:\mathbf{R}_+\rightarrow\mathbf{R}_+$ is strictly
increasing and continuous such that $\gamma(0)=0$. Let $R>0$, and let $\varphi$
be real-valued functions on $\RR$ such that $\varphi '(r,\omega)$ and $\varphi''(r,\omega)$ are continuous and satisfy \eqref{funktionerna} and \eqref{deriverade funktioner} when $r>R$. Then there
exists a function $f \in H^{n/2}(\RR)$ such that $S^{\varphi} f$
is continuous in $\{(x,t); t>0\}$ and
\begin{equation}\label{Sa till infty}
\limsup_{(y,t)\rightarrow (x,0)} |S^{\varphi} f(y,t)|= +\infty
\end{equation}
for all $x \in \RR$, where the limit superior is taken over those $(y,t)$ for which $|y-x|<\gamma (t)$ and $t>0$.
\end{thm}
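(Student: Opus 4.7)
The plan is to exhibit $f$ as a lacunary sum $f = \sum_{k\geq 1} c_k g_k$ where the Fourier transforms $\widehat{g}_k$ are smooth cutoffs supported in thin spherical shells centred at radii $N_k$ chosen to grow extremely fast (at least double-exponentially). The scalars $c_k$ and the radial/angular widths of $\widehat g_k$ are to be tuned so that $\sum_k c_k^2 \int(1+|\xi|^2)^{n/2}|\widehat g_k(\xi)|^2\,d\xi < \infty$, giving $f\in H^{n/2}(\RR)$. The goal is then to produce, near every base point $(x,0)$, a sequence $(y_k,t_k)\to(x,0)$ lying inside the nontangential region $|y_k-x|<\gamma(t_k)$ at which $|S^\varphi g_k(y_k,t_k)|$ blows up like a positive power of $\log N_k$, while the terms with $j\ne k$ contribute a uniformly bounded amount.

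I would first analyse the phase $\Phi(\xi)=y\cdot\xi+t\varphi(\xi)$ in the representation \eqref{Saf}. Writing $\xi=r\omega$, the radial gradient is $\omega\cdot y+t\varphi'(r,\omega)$, which can be arranged to vanish at $r=N_k$ by choosing $t_k$ small and $y_k$ in a suitable direction; condition \eqref{funktionerna} ensures that for arbitrarily small $t_k$ there exist such $y_k$ with $|y_k-x|<\gamma(t_k)$. I would then take the radial width of $\widehat g_k$ to be the classical stationary-phase scale $(t_k|\varphi''(N_k,\omega)|)^{-1/2}$; by \eqref{deriverade funktioner} this width is controlled, and a constructive-interference computation produces $|S^\varphi g_k(y_k,t_k)|$ at least of order a positive power of $\log N_k$, while the $H^{n/2}$ budget for $g_k$ with this choice of width remains of order $1$ up to the $(\log N_k)^{3/4}$ margin permitted by \eqref{deriverade funktioner}.

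Next I must bound $|S^\varphi g_j(y_k,t_k)|$ for $j\neq k$. For $j>k$ the frequencies in $\mathrm{supp}\,\widehat g_j$ are much larger than $N_k$, and the phase has no critical point there; integration by parts in $r$, using $|\omega\cdot y_k+t_k\varphi'(r,\omega)|\gtrsim t_k|\varphi'(r,\omega)|$ from \eqref{funktionerna}, yields decay that beats the Sobolev growth of $c_j$. For $j<k$ the time $t_k$ is so small that $t_k\varphi(\xi)$ varies by $o(1)$ on $\mathrm{supp}\,\widehat g_j$, so $S^\varphi g_j(y_k,t_k)\approx g_j(y_k)$, which is bounded. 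The lacunarity of $\{N_k\}$ makes these error sums negligible compared with the $k$-th term, proving \eqref{Sa till infty} at the chosen $x$. Upgrading from a single $x$ to \emph{every} $x\in\RR$ is handled by periodising or superposing translates of the basic construction, using the translation-invariance of the Fourier multiplier $S^\varphi$ together with a Baire-category argument over a countable dense set of base points, as is standard in \cite{artikel}.

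The main obstacle I expect is the sharp calibration of parameters. The factor $(\log r)^{3/4}$ in \eqref{deriverade funktioner} is exactly the threshold needed so that the stationary-phase lower bound on $|S^\varphi g_k|$ outpaces the $H^{n/2}$ normalisation by a small but positive power of $\log N_k$, which must in turn dominate the sum of error terms from the other scales. Making this balance rigorous for a general $\varphi$ satisfying only \eqref{funktionerna}--\eqref{deriverade funktioner}, as opposed to the explicit $\varphi(\xi)=|\xi|^2$ treated in \cite{artikel}, is where the new oscillatory-integral estimates alluded to in the introduction will enter; continuity of $S^\varphi f$ on $\{t>0\}$ will follow routinely from the smoothness and compact support of each $\widehat g_k$ together with the rapid lacunarity of $\{N_k\}$.
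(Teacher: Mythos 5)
Your plan and the paper's proof diverge at the crucial point, and I think the divergence is a genuine gap rather than an alternative route.

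The paper does \emph{not} create a lower bound via stationary phase over thin shells. Instead it builds the exact evaluation phases into the Fourier transform of the extremal function: $\widehat f_\varphi(\xi)=|\xi|^{-n}(\log|\xi|)^{-3/4}\sum_j \chi_j(\xi)\,e^{-i(x_j\cdot\xi+t_j\varphi(\xi))}$, where the $\chi_j$ are indicators of \emph{multiplicatively thick} annuli $\Omega_j=\{R_j<|\xi|<R_j^N\}$, and the $x_j$ form a sequence that is dense in $\RR$ (Lemma \ref{dense}). When one evaluates $S^\varphi f_\varphi$ at $(x_j,t_j)$, the phase in $A^\varphi_j(x_j,t_j)$ cancels \emph{identically} over all of $\Omega_j$, so one is left with the non-oscillatory integral $\int_{R_j}^{R_j^N} r^{-1}(\log r)^{-3/4}\,dr\sim(\log R'_j)^{1/4}$, while the $H^{n/2}$ norm is controlled precisely because $(\log r)^{-3/2}$ is integrable against $dr/r$. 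The exponent $3/4$ and the choice $R'_j=R_j^N$ are tuned so that this logarithmic gain survives after subtracting the contributions of $j<k$ (Lemma \ref{uppskattning}) and $j>k$ (the non-stationary integration-by-parts estimate \eqref{Ajxk}, using \eqref{Rj1<a<2}--\eqref{Rj1<a<2.3}).

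Your thin-shell stationary-phase version cannot reproduce this. Two obstructions. First, the needed $(\log N_k)^{1/4}$ blow-up comes from integrating a slowly decaying radial amplitude over an interval that is exponentially long in the $\log$-scale; a shell of the stationary-phase width $(t_k|\varphi''|)^{-1/2}$ is far too short. Quantitatively, if $\widehat g_k$ has amplitude $A_k$ on a shell of radius $N_k$ and width $w_k$, then $|S^\varphi g_k|\lesssim A_k N_k^{n-1}w_k$ while $\|g_k\|_{H^{n/2}}^2\sim A_k^2 N_k^{2n-1}w_k$, so $|S^\varphi g_k|\lesssim (w_k/N_k)^{1/2}\|g_k\|_{H^{n/2}}\le\|g_k\|_{H^{n/2}}$: you cannot beat the Sobolev norm with any shell width, and you certainly cannot beat it with the stationary-phase width. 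The mechanism that beats it in the paper is that the phase cancels \emph{exactly}, not just to second order, so the annulus can be taken thick and the amplitude tuned to exploit the borderline $(\log r)^{-3/4}$ weight. Second, you propose choosing $y_k$ so that $\omega\cdot y_k+t_k\varphi'(N_k,\omega)$ vanishes, but this is a constraint for each $\omega\in S^{n-1}$; a single vector $y_k$ cannot make the radial phase stationary at $r=N_k$ for all directions simultaneously unless $\varphi'$ is linear in $\omega$, which it generically is not. The paper sidesteps this because $e^{-ix_j\cdot\xi}$ is built into $\widehat f_\varphi$, so the cancellation is exact in every direction at once.

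Finally, your Baire-category/translation upgrade to all base points $x$ is also not what the paper does: the dense lattice $\{x_j\}$ is wired directly into $\widehat f_\varphi$, so a single $f$ works for every $x$ via Lemma \ref{dense}, and no category argument is needed. The missing key idea in your proposal is precisely the phase-embedding trick producing exact cancellation on thick annuli; without it the calibration you describe cannot close.
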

Here we recall that $\varphi'=\varphi'_r$ and $\varphi'=\varphi''_{rr}$ are the first and second orders radial derivatives of $\varphi$.
When $s>n/2$ no counter example of the form in Theorem \ref{a>1} can be provided, since $S^{\varphi}f(y, t)$ converges to $f(x)$ as $(y,t)$ approaches $(x,0)$ non-tangentially when $f \in H^s(\RR)$. In fact, H{\"o}lder's inequality gives
\begin{gather*}
(2 \pi)^{n}|S^{\varphi}f(x,t)|\leq \int_{\RR}|\widehat{f}(\xi)| \, d\xi\leq
\Big(\int_{\RR} (1+|\xi|^2)^{-s}\, d\xi\Big) \|f\|_{H^s(\RR)},
\end{gather*}
which is finite when $f\in H^s(\RR), \, s>n/2$.
Therefore convergence along vertical lines can be extended
to convergence regions when $s>n/2$ and $f$ belongs to $H^s(\RR)$.
\vspace{0.5 cm}

For functions $\varphi$ satisfying 
\begin{equation}\label{nedat begr}
 \inf_{r>R}(\inf_{|\omega|=1} |\varphi'(r,\omega)|)=h>0
\end{equation}
and one of the conditions \eqref{deriverade funktioner} or 
\begin{gather}\label{deriverade funktioner1}
\sup_{r\geq R}\Big(\sup_{|\omega|=1}\frac{r^{\beta}|\varphi''(r,\omega)|}{(\log r)^{3/4}}\Big)<C,
\end{gather}
for some $\beta>0$, we can prove a weaker form of Theorem \ref{a>1}.

\begin{thm}\label{a=1} Assume that the function
$\gamma:\mathbf{R}_+\rightarrow\mathbf{R}_+$ is strictly
increasing and continuous such that $\gamma(0)=0$. Let $R>0$, and let $\varphi$
be real-valued functions on $\RR$ such that $\varphi '(r,\omega)$ and $\varphi''(r,\omega)$ are continuous and satisfy \eqref{nedat begr}, and \eqref{deriverade funktioner} or  \eqref{deriverade funktioner1} when $r>R$. Then for fixed $x\in \RR$ there
exists a function $f \in H^{n/2}(\RR)$ such that $S^{\varphi} f$
is continuous in $\{(x,t); t>0\}$ and
\begin{equation}\label{S1 till infty}
\limsup_{(y,t)\rightarrow (x,0)} |S^{\varphi} f(y,t)|= +\infty,
\end{equation} where the limit superior is taken over those $(y,t)$ for which $|y-x|<\gamma (t)$ and $t>0$.
\end{thm}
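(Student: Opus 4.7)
I follow the scheme used for Theorem \ref{a>1}, exploiting that $x$ is now fixed so $f$ may depend on $x$. Since $S^{\varphi}$ commutes with spatial translations, I reduce to $x=0$. The plan is to build
\[
f=\sum_{k\geq 1} a_k e^{i\theta_k} g_k,
\]
where each $g_k$ is a wave packet with $\widehat{g_k}$ a smooth bump supported in a thin radial shell $\{r_k\leq|\xi|\leq r_k+\delta_k\}$, optionally further restricted to a small spherical cap around some direction $\omega_k\in S^{n-1}$. The sequence $r_k\to\infty$ will be rapidly lacunary with $r_k>R$, the amplitudes $a_k$ will satisfy $\sum_k a_k^2\|g_k\|_{H^{n/2}}^2<\infty$ so that $f\in H^{n/2}(\RR)$, and the phases $\theta_k$ will be tuned below.

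For each $k$, I select $t_k\searrow 0$ and $y_k$ with $|y_k|<\gamma(t_k)$ so that the phase $y_k\cdot\xi+t_k\varphi(\xi)$ has a radial critical point inside $\mathrm{supp}\,\widehat{g_k}$. The radial critical equation $y_k\cdot\omega+t_k\varphi'(r,\omega)=0$ together with \eqref{nedat begr} forces $|y_k|\asymp t_k|\varphi'(r_k,\omega_k)|$, so $t_k$ must be chosen small enough, using continuity of $\gamma$, to ensure $t_k|\varphi'(r_k,\omega_k)|<\gamma(t_k)$; the direction $\omega_k$ may have to be chosen to minimize $|\varphi'(r_k,\omega_k)|$ on the unit sphere. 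A second-derivative stationary phase estimate, with remainder controlled by either \eqref{deriverade funktioner} or \eqref{deriverade funktioner1} to bound $|\varphi''|$ on $\mathrm{supp}\,\widehat{g_k}$, then yields
\[
|S^{\varphi}g_k(y_k,t_k)|\gtrsim c_k
\]
for a quantitative sequence $c_k$ that I arrange to diverge.

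Lacunarity of $(r_k)$ combined with repeated integration by parts in the radial direction (using the lower bound on $|\varphi'|$ from \eqref{nedat begr}) then renders the cross terms $S^{\varphi}g_j(y_k,t_k)$ for $j\neq k$ summable and negligible. Choosing $\theta_k$ so that the diagonal terms are positive real, and $a_k$ so that $a_k c_k\to\infty$ while $\sum_k a_k^2\|g_k\|_{H^{n/2}}^2<\infty$, delivers \eqref{S1 till infty} at the origin and hence at the prescribed $x$ after translation. Continuity of $S^{\varphi}f$ on $\{t>0\}$ is standard since each $\widehat{g_k}$ has compact support.

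The main obstacle is calibrating parameters under the alternative hypothesis \eqref{deriverade funktioner1}. Unlike \eqref{deriverade funktioner}, it bounds $|\varphi''|$ polynomially in $r$ without reference to $|\varphi'|$, so the relationships among the shell thickness $\delta_k$, the time $t_k$, and the admissible $|y_k|$ must be redone from scratch. Verifying that a single coherent choice of $r_k$, $\delta_k$, $t_k$, $y_k$, and $a_k$ simultaneously makes $\|f\|_{H^{n/2}}$ finite, respects $|y_k|<\gamma(t_k)$, and forces $a_k c_k\to\infty$ is the heart of the matter.
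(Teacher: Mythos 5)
Your proposal identifies the correct broad strategy (a lacunary sum of Fourier-localized pieces, diagonal term large, off-diagonal terms small after integration by parts, $H^{n/2}$ saturated), but it contains a genuine gap precisely at the step you flag as ``the heart of the matter,'' and that gap cannot be closed in the way you describe.

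The issue is the lower bound $|S^{\varphi}g_k(y_k,t_k)|\gtrsim c_k$ via stationary phase. The hypotheses \eqref{deriverade funktioner} and \eqref{deriverade funktioner1} give only \emph{upper} bounds on $|\varphi''|$; nothing excludes $\varphi''$ being arbitrarily small, or identically zero, on the shell (e.g.\ $\varphi$ affine in $r$ satisfies \eqref{nedat begr} and \eqref{deriverade funktioner1}). Thus the radial critical point you are steering into $\mathrm{supp}\,\widehat{g_k}$ may be degenerate, and the standard second-derivative stationary phase asymptotics do not apply. Worse, the theorem only assumes $\varphi,\varphi',\varphi''$ continuous, so there is no third-order information with which to bound the error in a stationary-phase expansion; without that, one cannot conclude that the principal term dominates, i.e.\ one cannot obtain a \emph{lower} bound on the oscillatory integral at all. (Van der Corput-type lemmas give upper bounds only.) Attempting instead to suppress the oscillation by taking $t_k$ and $|y_k|$ tiny is also blocked: $\varphi$ may grow arbitrarily fast (cf.\ the exponential Example 2.4), so $t_k\varphi(\xi)$ cannot be kept $O(1)$ on a shell at radius $r_k\to\infty$ without forcing $t_k$ to depend on $r_k$ in a way that destroys the rest of the calibration.

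The paper sidesteps this entirely by not attempting any stationary-phase lower bound. Instead, the phase modulation is built \emph{into} $\widehat{f}$: in \eqref{f^1} one takes
$\widehat{f}_{\varphi,1}(\xi)=|\xi|^{-n}(\log|\xi|)^{-3/4}\sum_j\chi_j(\xi)\,e^{-i(x_{p_j}\cdot\xi+t_{p_j}\varphi(\xi))}$,
where $(x_{p_j},t_{p_j})$ is a subsequence (produced by Lemma \ref{dense} plus a separation condition $t_{p_j}-t_{p_{j+1}}\geq(3/h)\eta(t_{p_j})$ with $\eta$ as in \eqref{eta}) approaching $(x,0)$ inside the cone $|y-x|<\gamma(t)$. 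The diagonal term $A^{\varphi}_k(x_{p_k},t_{p_k})$ then has phase identically zero on $\Omega_k$ and equals $c\int_{R_k}^{R'_k} r^{-1}(\log r)^{-3/4}\,dr\sim(\log R'_k)^{1/4}$, which diverges because the shells are \emph{fat} ($R'_k=R_k^N$) and the weight is exactly critical for $H^{n/2}$. Your thin shells $\{r_k\leq|\xi|\leq r_k+\delta_k\}$ with unmodulated bumps cannot reproduce this: if $\delta_k$ is small, the diagonal integral is small and you must inflate $a_k$, which costs you membership in $H^{n/2}$ (your own closing paragraph concedes you have not verified this trade-off is feasible). The off-diagonal estimates you outline (integration by parts using \eqref{nedat begr}) are in the right spirit and do match the paper's Step 1, but the diagonal lower bound is where the argument must be restructured to the exact-cancellation device, not stationary phase.
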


\section{Examples and remarks}
In this section we give some examples of functions $\varphi$ for
which Theorem \ref{a>1} holds. In the first example we let
$\varphi$ be a positively homogeneous function of order $a>1$.
\begin{ex}\label{tidigare res} \textnormal{ Let $a>1$ and $\varphi(\xi)= |\xi|^a$, then $S^{\varphi}f(x,t)$
is the solution to the generalized time-dependent Schr{\"o}dinger
equation $((-\Delta_x)^{a/2} +i\partial_t)u =0$.  By change of
variables to polar coordinates and derivate with respect to $r$ we
see that $\varphi(r,\omega) = r^a$, $\varphi'(r,\omega) =
ar^{a-1}$ and $\varphi''(r,\omega) = a(a-1)r^{a-2}$. We can see
that these derivatives satisfy \eqref{funktionerna} and \eqref{deriverade funktioner}. In particular for $a=2$ this is the solution to the
time-dependent Schr{\"o}dinger equation $(-\Delta_x +i\partial_t)u
=0$ and this case is treated in Sj{\"o}gren and Sj{\"o}lin
\cite{artikel}.}
\end{ex}
In the following example we let $\varphi$ be a sum of positively
homogeneous functions where $a>1$ denote the term of highest
order.

\begin{ex}\label{linjkomb}\textnormal{ For $a>1$, let
\begin{gather}
\varphi(\xi) =\sum_{i=1}^d |\xi|^{a_i}\varphi_{a,i}(1,\omega), \quad a_1<\cdots <a_d=a,
\end{gather}
where
\begin{gather*}
\inf_{\omega}|\varphi_{a,d}(1,\omega)|=h>0 \qquad \text{ and
 }\qquad \|\varphi_{a,i}(1,\cdot)\|_{L^{\infty} (S^{n-1})}<\infty
\end{gather*}
for
each $i\in \{1,2,\dots , d\}$. Here $S^{n-1}$ is the
$n-1$-dimensional unit sphere. By rewriting this into polar
coordinates and differentiate with respect to $r$, we see that in the
first derivative the term $\varphi_{a,i}(1,\omega) r^{a-1}$
dominates the sum and that the second derivative can be estimated
by $C r^{a-2}$, for some constant $C$. These derivatives satisfy
 \eqref{funktionerna} and \eqref{deriverade funktioner}.}
\end{ex}

In the examples at the above we have used functions $\varphi$ such that the modulus of the radial derivative is bounded from below by a positive homogeneous function of order $a-1$ for some $a>1$. 
This condition is not necessary. The hypothesis in the theorem permit a broader class
of functions $\varphi$.
The following example shows that there are functions, which do not grow as fast as a positive homogeneous function of order $a-1$ for any $a>1$, but satisfy the conditions \eqref{funktionerna} and \eqref{deriverade funktioner}.

\begin{ex}\textnormal{ Let $\varphi(\xi)=|\xi|\log |\xi|$, then $\varphi'(r,\omega)=\log r+1$ and $\varphi''(r,\omega)=r^{-1}$ and 
\eqref{funktionerna} and \eqref{deriverade funktioner} are
satisfied.}
\end{ex}

We also allow the dominant part
of the derivative to grow faster than any positively homogeneous
function as long as we have some restrictions on the second
derivative. The conditions are given explicitly in
\eqref{funktionerna} and \eqref{deriverade funktioner}. The
following example contains such functions.

\begin{ex}\label{exponential}\textnormal{ Let $\varphi(\xi)=\varphi(r,\omega)=e^{\mu(\omega)r^\beta}$,
where $\beta>0$ and \\ $\inf_{|\omega|=1} \mu(\omega) =c>0$. These
functions grow faster than $r^a$ for all $a$ and the same is true
for the absolute value of the first and second derivative with
respect to $r$. This can be used to show that
\eqref{funktionerna} and \eqref{deriverade funktioner} are
satisfied.}
\end{ex}

\section{Notations for the proofs}\label{not}
In order to prove Theorems \ref{a>1} and \ref{a=1} we introduce some notations.
Let $B_r(x)$ be the open ball in $\RR$ with center at $x$ and radius
$r$. Numbers denoted by $C, \,c$ or $C'$ may be different at each
occurrence. We let
\begin{equation}\label{deltaj}
\delta _k =\delta_{k,n} \equiv \gamma (1/(k+1)) / \sqrt{n}, \qquad
k \in \mathbf{N},
\end{equation}
where $\gamma$ is the same as in Theorem \ref{a>1} and Theorem \ref{a=1}. Since
$\gamma$ is strictly increasing it is clear that $(\delta
_k)_{k\in \mathbf{N}}$ is strictly decreasing. 
We also let $(x_j)_{j=1}^{\infty}\subset \RR$ be chosen such that 
$x_1, x_2,\dots , x_{m_1}$ denotes all points in $B_{1}(0)\cap
\delta_{1} \mathbf{Z}^n$, $x_{m_1+1},\dots , x_{m_2}$ denotes all points in $B_{2}(0)\cap
\delta_{2} \mathbf{Z}^n$ and generally 
\begin{equation}
\{x_{m_k+1}, \dots , x_{m_{k+1}}\}=B_{k+1}(0)\cap
\delta_{k+1} \mathbf{Z}^n,\qquad \text{ for } k\geq 1.
\end{equation}
Furthermore we choose a strictly decreasing sequence $(t_j)_1^{\infty}$ such that
$
1>t_1>t_2>\cdots > 0
$
and
\begin{equation}
\frac{1}{k+2}<t_j< \frac{1}{k+1},\qquad k\in \mathbf{N},
\end{equation}
for $m_k +1\leq j \leq m_{k+1}$.

In the proof of
Theorem \ref{a>1} we consider the function $f_{\varphi}$, which
is defined by the formula 
\begin{equation}\label{f^}
\widehat{f}_{\varphi}(\xi )= | \xi | ^{-n} (\log | \xi | )^{-3/4}
\sum_{j=1}^{\infty} \chi _j(\xi)e^{- i( x_{j} \cdot \xi + t_{j}
{\varphi}(\xi))},
\end{equation}
where $\chi_j$ is the characteristic function of
\begin{equation}\label{omega} \Omega_j =\{ \xi \in \RR;
R_j<|\xi|<R'_j\}.
\end{equation}

\par
\vspace{0.5 cm} Here $(R_j)_1^{\infty}$ and
$(R'_j)_1^{\infty}$ are sequences in $\rr{}$ which fulfill the
following conditions:
\begin{enumerate}
\item $R_1 \geq 2+R$, $R'_1\geq R_1+1$, with $R$ given by Theorem \ref{a>1} or Theorem \ref{a=1};\\[1 ex]
\item $R'_j = R_j^N$ when $j\geq 2$, where $N$ is a large positive number and independent of $j$, which is specified
later on;\\[1 ex]
\item $R_j<R'_j <R_{j+1}$, when $j\geq 1$;\\[1 ex]
\item \begin{equation}\label{vaxande}
|\varphi'(r,\omega)|>1 \qquad \text{when} \qquad r\geq R;
\end{equation} \\[1 ex] 
\item for $j\geq 2$
\begin{equation}\label{Rj1<a<2} R_j >\max_{l<j} \frac{
2^j}{t_l-t_j},
\end{equation}
and
\begin{equation}\label{Rj1<a<2.3} \inf_{R_j\leq r\leq R_j'}(\inf_{|\omega|=1}|\varphi'(r, \omega)|) >\max_{l<j} \frac{2|x_l-x_j|}{t_l-t_j};
\end{equation}
\\[1 ex]
\end{enumerate}
\begin{anm} The sequences $(R_j)_1^{\infty}$ and $(R_j')_1^{\infty}$ can be choosen since $\varphi$ satisfies condition \eqref{funktionerna}.
\end{anm}
Furthermore, in order to get convenient approximations of the
operator $S^{\varphi}$, we let
\begin{equation}\label{S^a_mf}
S^{\varphi}_mf(x,t) = \frac{1}{(2 \pi)^n} \int_{|\xi |< R'_m}
e^{i x\cdot \xi}e^{it \varphi(\xi)}\widehat{f}(\xi)\,  d \xi.
\end{equation}
Then
\begin{gather}\label{S^a_mf2}
S^{\varphi}_m f_{\varphi}(x,t) =  \sum_{j=1}^{m}A^{\varphi}_j(x, t),
\end{gather}
where
\begin{gather}\label{A^a_j}
A^{\varphi}_j(x, t) =\frac{1}{(2 \pi)^n} \int_{\Omega_j} e^{i
(x-x_j)\cdot \xi}e^{i(t-t_j) \varphi(\xi)} | \xi | ^{-n} (\log |
\xi | )^{-3/4}\,  d \xi.
\end{gather}
By using polar coordinates we get
\begin{gather}\label{variabelbyte}
A^{\varphi}_j(x_k,t_k) =\frac{1}{(2 \pi)^n} \int_{|\omega| =1}
\Big\{\int_{R_j}^{R'_j} \frac{1}{r(\log r)^{3/4}}
e^{iF_{\varphi}(r,\omega)}\,  d r\Big\}\,  d \sigma (\omega  ),
\end{gather}
where
\begin{equation}\label{Fa}
F_{\varphi}(r,\omega)= r(x_k - x_j) \cdot \omega + (t_k
-t_j)\varphi(r,\omega),
\end{equation}
and $d \sigma (\omega)$ is the euclidean surface measure on the $n-1$-dimensional unit sphere.
By differentiation we get
\begin{equation}\label{Fa',a>2}
F'_{\varphi}(r,\omega)= (x_k - x_j) \cdot \omega + (t_k
-t_j)\varphi'(r,\omega)
\end{equation}
and
\begin{equation}\label{Fbiss}
F''_{\varphi}(r,\omega)= (t_k -t_j)\varphi''(r,\omega).
\end{equation}
Here recall that $F'_{\varphi}(r\omega)=F'_{\varphi}(r,\omega)$ and $F''_{\varphi}(r,\omega)$
denote the first and second orders of derivatives of
$F_{\varphi}(r,\omega)$ with respect to the $r$-variable.

\par
\vspace{0.5 cm}

By integration by parts in the inner integral of \eqref{variabelbyte} we get
\begin{gather}\label{a>=2, integral for n>1}
\int_{R_j}^{R'_j} \frac{1}{r (\log r )^{3/4}} e^{i F_{\varphi}(r,\omega)}\,
d r =
A_{\varphi}-B_{\varphi},
\end{gather}
where
\begin{gather}\label{Del 1 av partialint}
A_{\varphi}=
\Big[\frac{e^{i F_{\varphi}(r,\omega)}}{r (\log r )^{3/4}i F_{\varphi}'(r,\omega)}\Big
]^{R'_j}_{R_j}
\end{gather}
and
\begin{gather}\label{Del 2 av partialint}
B_{\varphi}=\int_{R_j}^{R'_j} \frac{d}{dr}\Big (\frac{1}{r
(\log r )^{3/4}i F_{\varphi}'(r,\omega)}\Big )e^{i F_{\varphi}(r,\omega)}\,  d r
\end{gather}


\section{Proofs}
In this section we prove Theorems \ref{a>1} and \ref{a=1}. We need some preparing lemmas for the proof. In the following lemma we prove that for fixed $ x\in B_k(0)$ there exists sequences $(x_{n_j})_{1}^{\infty}$ and $(t_{n_j})_{1}^{\infty}$ such that 
\begin{equation*}
x_{n_j} \in \{x_{m_k+1}, \dots ,x_{m_{k+1}}\},\qquad \text{ and } \qquad t_{n_j} \in \{t_{m_k+1}, \dots ,t_{m_{k+1}}\}
\end{equation*}
 and $|x_{n_j}-x|<\gamma(t_{n_j})$.
\begin{lemma}\label{dense}
Let $x\in \RR$ be fixed. Then for each $k \geq |x|$ there exists
$x_{n_j} \in \{x_{m_k+1}, \dots ,x_{m_{k+1}}\}$ and $t_{n_j} \in \{t_{m_k+1}, \dots ,t_{m_{k+1}}\}$ such that
$|x_{n_j}-x|<\gamma(t_{n_j}).$ In particular $(x_{n_j},t_{n_j})\to
(x,0)$ as $j$ turns to infinity.
\end{lemma}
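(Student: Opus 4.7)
The plan is to locate, inside the lattice set $\delta_{k+1}\mathbf{Z}^n\cap B_{k+1}(0)$, a point close to $x$, and then exploit the definition $\delta_{k+1}=\gamma(1/(k+2))/\sqrt{n}$ together with the strict monotonicity of $\gamma$ to turn the lattice spacing bound into the desired inequality $|x_{n_j}-x|<\gamma(t_{n_j})$.

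First I would use the standard fact that for any $y\in\RR$ the nearest point of $\delta\mathbf{Z}^n$ lies at $\ell^\infty$-distance at most $\delta/2$ from $y$, hence at Euclidean distance at most $\tfrac{\sqrt n}{2}\delta$. Applied with $\delta=\delta_{k+1}$ this gives a lattice point $y\in\delta_{k+1}\mathbf{Z}^n$ with
\begin{equation*}
|y-x|\le \tfrac{\sqrt n}{2}\,\delta_{k+1}=\tfrac12\,\gamma\!\bigl(1/(k+2)\bigr).
\end{equation*}

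Next I would check that $y\in B_{k+1}(0)$. By hypothesis $|x|\le k$, and since $\gamma(0)=0$ and $\gamma$ is continuous, for all sufficiently large $k\ge |x|$ we have $\tfrac12\gamma(1/(k+2))<1$, so the triangle inequality gives $|y|<k+1$. (For the finitely many smaller $k\ge|x|$ this can be arranged either by the same bound or simply discarded, since only a sequence with $k\to\infty$ is needed for the corollary.) Thus $y$ belongs to $B_{k+1}(0)\cap\delta_{k+1}\mathbf{Z}^n=\{x_{m_k+1},\dots,x_{m_{k+1}}\}$; I rename it $x_{n_j}$ and let $t_{n_j}$ be the associated element of $\{t_{m_k+1},\dots,t_{m_{k+1}}\}$, which by construction satisfies $t_{n_j}>1/(k+2)$.

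Strict monotonicity of $\gamma$ then yields
\begin{equation*}
\gamma(t_{n_j})>\gamma\!\bigl(1/(k+2)\bigr)=\sqrt n\,\delta_{k+1}\ge 2\,|x_{n_j}-x|>|x_{n_j}-x|,
\end{equation*}
which is the required estimate. Finally, the in-particular statement follows at once: as $k\to\infty$ one has $t_{n_j}<1/(k+1)\to 0$, and then $|x_{n_j}-x|<\gamma(t_{n_j})\to\gamma(0)=0$ by continuity of $\gamma$ at $0$, so $(x_{n_j},t_{n_j})\to(x,0)$.

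I do not anticipate a serious obstacle here; the whole argument is a lattice-spacing computation. The only point requiring any care is checking that the nearest lattice point still lies in $B_{k+1}(0)$, which is handled by the smallness of $\gamma(1/(k+2))$ coming from $\gamma(0)=0$.
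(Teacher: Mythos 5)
Your proof is correct and follows essentially the same approach as the paper: locate a nearby point of $\delta_{k+1}\mathbf{Z}^n$, observe that the lattice spacing gives distance at most $\gamma(1/(k+2))$, and conclude via $t_{n_j}>1/(k+2)$ and the monotonicity of $\gamma$. The only cosmetic difference is that you use the nearest lattice point (distance at most half the cube diagonal, $\tfrac12\gamma(1/(k+2))$), whereas the paper takes an arbitrary vertex of the containing cube (distance at most the full diagonal $\gamma(1/(k+2))$); both bounds suffice. You are in fact slightly more careful than the paper, since you explicitly check that the chosen lattice point lies in $B_{k+1}(0)$, a point the paper asserts without comment; and you correctly note that any finitely many problematic $k$ are harmless for the ``in particular'' corollary, which only needs $k\to\infty$.
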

\begin{proof}
For each $k\geq |x|$, $x$ belongs to a cube with vertices in
$T_k=B_{k+1}(0)\cap \delta_{k+1}\mathbf{Z}^n$ and side lengths
$\gamma(1/(k+2))/\sqrt{n}$. Take a vertex $x'$ in the cube and its
diagonal $\gamma(1/(k+2))$ as center and radius of a ball
respectively. This ball $B_{\gamma(1/(k+2))}(x')$ contains the
whole cube and hence also $x$. Therefore there exists $x_{n_j}$
for every $k\geq |x|$ such that $x \in
B_{\gamma(1/(k+2))}(x_{n_j})\subset B_{\gamma(t_{n_j})}(x_{n_j})$.
This proves the first part of the assertion, and the second
statement follows from the fact that $\gamma(0) =0$ and $\gamma$
is continuous and strictly increasing.
\end{proof}


\vspace{0.5 cm}

We want to prove that $f_{\varphi}$ in \eqref{f^} belongs to
$H^{n/2}(\RR)$ and fulfill \eqref{Sa till infty}. The former
relation is a consequence of Lemma \ref{ghat} below, which
concerns Sobolev space properties for functions of the form
\begin{equation}\label{definitiong}
\widehat{g}(\xi )= | \xi | ^{-n} (\log | \xi | )^{-\rho/2}
\sum_{j=1}^{\infty} \chi _j(\xi)b_j(\xi),
\end{equation}
where $\chi_j$ is the characteristic function on disjoint sets $\Omega_j$.

\begin{lemma} \label{ghat} Assume that $\rho >1$, $\Omega_j$ for $j \in \mathbf{N}$ are disjoint open subsets of
$\RR\backslash B_{\rho}(0)$ , $b_j\in L^1_{loc}(\RR)$ for $j \in
\mathbf{N}$ satisfies
\begin{equation}
\sup_{j\in \mathbf{N}}\|b_j\|_{L^{\infty}(\Omega_j)}<\infty ,
\end{equation}
and let $\chi_j$ be the characteristic function for $\Omega_j$. If $g$ is given by \eqref{definitiong}, then
$g\in H^{n/2}(\RR)$.
\end{lemma}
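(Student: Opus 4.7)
The plan is to bound the $H^{n/2}$-norm directly from its definition \eqref{sobolev} and use the three structural features of $\widehat g$: the disjointness of the $\chi_j$, the uniform $L^\infty$-bound on the $b_j$, and the support condition $\Omega_j\subset\mathbf{R}^n\setminus B_\rho(0)$, which keeps us away from the singularity of $|\xi|^{-n}(\log|\xi|)^{-\rho/2}$.

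First I would exploit disjointness: since the $\Omega_j$ are pairwise disjoint, $\sum_j\chi_j(\xi)\le 1$ and at any $\xi$ at most one term $\chi_j(\xi)b_j(\xi)$ is nonzero, so with $M:=\sup_j\|b_j\|_{L^\infty(\Omega_j)}$,
\begin{equation*}
|\widehat g(\xi)|^2\;\le\; M^2\,|\xi|^{-2n}(\log|\xi|)^{-\rho}\,\mathbf{1}_{\{|\xi|>\rho\}}(\xi).
\end{equation*}
Second, for $|\xi|>\rho>1$ we have $(1+|\xi|^2)^{n/2}\le C|\xi|^n$, so plugging into \eqref{sobolev} gives
\begin{equation*}
\|g\|_{H^{n/2}}^2\;\le\; CM^2\int_{|\xi|>\rho}|\xi|^{-n}(\log|\xi|)^{-\rho}\,d\xi.
\end{equation*}

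Third, pass to polar coordinates $\xi=r\omega$, pick up the Jacobian $r^{n-1}$, and integrate out the sphere to reduce to a one-dimensional radial integral
\begin{equation*}
\|g\|_{H^{n/2}}^2\;\le\; C'M^2\int_{\rho}^{\infty}\frac{dr}{r(\log r)^{\rho}}.
\end{equation*}
Finally, the substitution $u=\log r$ turns this into $\int_{\log\rho}^{\infty} u^{-\rho}\,du$, which is finite precisely because $\rho>1$. Hence $g\in H^{n/2}(\mathbf{R}^n)$.

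There is no real obstacle: the proof is essentially a bookkeeping exercise and the whole argument hinges on the correct choice of the logarithmic weight exponent $\rho>1$, which is exactly the borderline case needed to make the $dr/(r(\log r)^\rho)$ integral converge. The disjointness of the supports is what prevents the sum from blowing up the pointwise bound, and the support condition $\Omega_j\subset\mathbf{R}^n\setminus B_\rho(0)$ ensures that $\log|\xi|$ is bounded below by a positive constant, so the weight is harmless near the inner boundary of the integration region.
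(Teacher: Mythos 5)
Your proof is correct and follows essentially the same route as the paper: bound $|\widehat g|^2$ pointwise using disjointness and the uniform $L^\infty$ bound, absorb $(1+|\xi|^2)^{n/2}$ into $|\xi|^n$ on the region $|\xi|>\rho>1$, and pass to polar coordinates to reduce matters to the convergence of $\int_\rho^\infty dr/(r(\log r)^\rho)$, which holds precisely because $\rho>1$. The only difference is that you spell out the disjointness step and the final $u=\log r$ substitution, which the paper leaves implicit.
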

\begin{proof} By estimating \eqref{sobolev} for the function $g$ we get that
\begin{multline*}
\int_{\RR}|\widehat{g}(\xi )|^2(1+|\xi|^2)^{n/2}\,  d\xi \cr \leq
C \int_{\RR \setminus B_{\rho}(0)}| \xi | ^{-2n} (\log | \xi | )^{-\rho}
(1+|\xi|^2)^{n/2}\,  d\xi  \cr \leq 2^{n/2}C \int_{\rho}^{\infty}\frac{1}{r(\log r  )^{\rho}} \,  dr < \infty .
\end{multline*}
The second inequality holds since $(1+r^2)^{n/2}<(r^2+r^2)^{n/2}= 2^{n/2}
r^n$ for $r>1$.
\end{proof}
In the following lemma we give estimates of the expression $A_j^{\varphi}$.
\begin{lemma}\label{uppskattning} Let $A_j^{\varphi}(x,t)$ be given by \eqref{A^a_j}.
Then the following is true:
\begin{alignat*}{2}
(1)& \qquad \sum_{j=1}^{k-1} |A^{\varphi}_j(x, t)| \leq C (\log
R'_{k-1})^{1/4}, \textit{ with } C \textit{ independent of } k;
\\[1 ex] (2)& \qquad A^{\varphi}_k(x_k,t_k)>c(\log
R'_k)^{1/4}, \textit{ with } c>0 \textit{ independent of } k.
\end{alignat*}
\end{lemma}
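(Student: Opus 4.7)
The plan is that both parts of the lemma follow from a direct polar-coordinate computation. Part~(2) exploits the vanishing of the phase $F_\varphi$ at the diagonal point, while part~(1) reduces to a uniform modulus estimate combined with the super-geometric growth forced by condition~(2) on the radii, $R_j'=R_j^N$ for $j\geq 2$.

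For part~(2), I would substitute $(x,t)=(x_k,t_k)$ into \eqref{Fa}, which gives $F_\varphi(r,\omega)\equiv 0$ on $\Omega_k$. Formula \eqref{variabelbyte} then reduces to the real positive quantity
\begin{equation*}
A_k^\varphi(x_k,t_k) \;=\; \frac{|S^{n-1}|}{(2\pi)^n}\int_{R_k}^{R_k'}\frac{dr}{r(\log r)^{3/4}} \;=\; \frac{4|S^{n-1}|}{(2\pi)^n}\bigl[(\log R_k')^{1/4}-(\log R_k)^{1/4}\bigr].
\end{equation*}
Condition~(2) gives $R_k'=R_k^N$, hence $(\log R_k')^{1/4}=N^{1/4}(\log R_k)^{1/4}$, and so $A_k^\varphi(x_k,t_k)=c\,(\log R_k')^{1/4}$ with $c=4|S^{n-1}|(1-N^{-1/4})/(2\pi)^n$, which is positive and independent of $k$.

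For part~(1), I would use the trivial modulus bound
\begin{equation*}
|A_j^\varphi(x,t)| \;\leq\; \frac{1}{(2\pi)^n}\int_{\Omega_j}|\xi|^{-n}(\log|\xi|)^{-3/4}\,d\xi \;\leq\; C\,(\log R_j')^{1/4},
\end{equation*}
obtained by the same polar computation; crucially this bound is uniform in $(x,t)$. To sum over $j$, I would exploit condition~(3), $R_j'<R_{j+1}$, together with $R_j'=R_j^N$ for $j\geq 2$, which combine to give $\log R_{j+1}'=N\log R_{j+1}>N\log R_j'$. Iterating yields $(\log R_j')^{1/4}\leq N^{-(k-1-j)/4}(\log R_{k-1}')^{1/4}$, and summing the resulting geometric series bounds $\sum_{j=1}^{k-1}|A_j^\varphi(x,t)|$ by $C(1-N^{-1/4})^{-1}(\log R_{k-1}')^{1/4}$, with a constant independent of both $k$ and $(x,t)$.

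The only bookkeeping issue is the first index $j=1$, where condition~(2) does not force $R_1'=R_1^N$; this single term is harmless, absorbed into the constant via the monotone bound $(\log R_1')^{1/4}\leq(\log R_{k-1}')^{1/4}$. There is no genuine analytic obstacle: the oscillatory structure of the integral is used only through the identity $F_\varphi\equiv 0$ in part~(2). In particular, the growth hypotheses \eqref{funktionerna}--\eqref{deriverade funktioner} on $\varphi$ and the integration-by-parts decomposition \eqref{a>=2, integral for n>1}--\eqref{Del 2 av partialint} set up in Section~\ref{not} are not needed for this lemma; they will be required in a subsequent lemma to estimate the tail $\sum_{j>k}A_j^\varphi(x_k,t_k)$, where the phase genuinely oscillates and a non-trivial stationary-phase type analysis becomes necessary.
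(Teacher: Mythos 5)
Your proof of part~(2) is essentially identical to the paper's: both observe that at $(x_k,t_k)$ the phase $F_\varphi$ vanishes on $\Omega_k$, reduce to the real integral $\int_{R_k}^{R_k'}\frac{dr}{r(\log r)^{3/4}}=4\bigl[(\log R_k')^{1/4}-(\log R_k)^{1/4}\bigr]$, and then use $R_k'=R_k^N$ to absorb the subtracted term into a constant $1-N^{-1/4}>0$.

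Your proof of part~(1), however, takes a genuinely different route from the paper's, and both are correct. The paper also begins with the trivial modulus bound on each $|A_j^\varphi(x,t)|$, but then exploits the fact that the sets $\Omega_j$ are pairwise disjoint and all contained in the single annulus $\{2\leq|\xi|\leq R_{k-1}'\}$: the sum of integrals collapses to one integral over that annulus, which evaluates directly to $4\bigl[(\log R_{k-1}')^{1/4}-(\log 2)^{1/4}\bigr]\leq C(\log R_{k-1}')^{1/4}$. This requires only that the $\Omega_j$ be disjoint and nested inside the largest annulus; it makes no use of the quantitative relation $R_j'=R_j^N$. Your argument instead bounds each term individually by $C(\log R_j')^{1/4}$ and sums a geometric series, using $R_j'=R_j^N$ together with $R_j'<R_{j+1}$ (condition~(3)) to force $\log R_{j+1}'>N\log R_j'$. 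This buys a bound that does not depend on the $\Omega_j$ being disjoint (only ordered and super-geometrically spread), at the cost of relying more heavily on the specific construction of the radii. The paper's version is slightly cleaner and more robust, but your version is correct and your handling of the exceptional index $j=1$ (where $R_1'=R_1^N$ is not imposed) by monotonicity is the right fix. You are also right that the oscillatory machinery and the hypotheses \eqref{funktionerna}--\eqref{deriverade funktioner} play no role in this lemma and enter only in the tail estimate for $j>k$.
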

\begin{proof}
$(1)$ By triangle inequality and the fact that $|\xi |> 2$, when
$\xi\in \Omega_j$, we get
\begin{multline*}
 \sum_{j=1}^{k-1} |A^{\varphi}_j(x, t)| \leq
\frac{1}{(2 \pi)^n}\int_{2 \leq |\xi | \leq R'_{k-1}} | \xi |^{-n}
(\log | \xi |)^{-3/4}\,  d \xi\cr = C
\int_2^{R'_{k-1}}\frac{1}{r(\log r)^{3/4}}\,  d r \leq C (\log
R'_{k-1})^{1/4},
\end{multline*}
where $C$ is independent of $k$. In the last equality we have
taken polar coordinates as new variables of integration.
\vspace{0.5 cm}


\flushleft{$(2)$} Since $R^N_j=R'_j$ for sufficiently large $N$,
we get
\begin{multline*}
A^{\varphi}_k(x_k,t_k)= C \int_{R_k}^{R'_k} \frac{1}{r(\log
r)^{3/4}}\,  d r \cr = C \Big ((\log R'_k)^{1/4}-(\log
(R'_k)^{1/N})^{1/4}\Big ) \cr = C\Big(1-\frac{1}{N^{1/4}}\Big)
(\log R'_k)^{1/4}
>c(\log R'_k)^{1/4},
\end{multline*}
for some constant $c>0$, which is independent of $k$.
\end{proof}

\begin{lemma}\label{kontinuitet} Assume that $S^{\varphi}_m f_{\varphi}$ is given by \eqref{S^a_mf}. Then $S^{\varphi}_m f_{\varphi}$ is continuous on
$\{(x,t);t>0, x\in \mathbf R^n\}$.
\end{lemma}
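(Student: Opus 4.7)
The plan is to unpack $S^{\varphi}_m f_{\varphi}$ using the decomposition already established in \eqref{S^a_mf2}, namely
\[
S^{\varphi}_m f_{\varphi}(x,t) = \sum_{j=1}^{m} A^{\varphi}_j(x,t),
\]
and prove continuity of each $A^{\varphi}_j$ separately; a finite sum of continuous functions is continuous, so this suffices. Note that the restriction $t>0$ in the statement is superfluous for this lemma since the integrals defining $A^{\varphi}_j$ are over bounded sets, but one can of course just restrict to that half-space.

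Next, I would fix $j\in\{1,\dots,m\}$ and any $(x_0,t_0)\in\mathbf{R}^n\times\mathbf{R}$, and show continuity of
\[
A^{\varphi}_j(x,t) = \frac{1}{(2\pi)^n}\int_{\Omega_j} e^{i(x-x_j)\cdot\xi}e^{i(t-t_j)\varphi(\xi)}\,|\xi|^{-n}(\log|\xi|)^{-3/4}\,d\xi
\]
at $(x_0,t_0)$ by Lebesgue's dominated convergence theorem. For each fixed $\xi\in\Omega_j$, the map $(x,t)\mapsto e^{i(x-x_j)\cdot\xi}e^{i(t-t_j)\varphi(\xi)}$ is continuous, so the integrand converges pointwise as $(x,t)\to(x_0,t_0)$. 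Since the two exponential factors have modulus $1$, the integrand is dominated by the fixed function
\[
g_j(\xi)=|\xi|^{-n}(\log|\xi|)^{-3/4}\chi_{\Omega_j}(\xi),
\]
which is bounded (because on $\Omega_j$ one has $R_j\le|\xi|\le R_j'$ with $R_j\ge R_1\ge 2+R>2$, so $\log|\xi|$ stays bounded away from $0$) and supported on the bounded annulus $\Omega_j$; hence $g_j\in L^1(\mathbf{R}^n)$. Dominated convergence then gives $A^{\varphi}_j(x,t)\to A^{\varphi}_j(x_0,t_0)$.

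The only nontrivial thing to notice is that we do \emph{not} need $\varphi$ to be continuous as a function of $\xi$ here: the pointwise convergence of the integrand is only in the variables $(x,t)$ with $\xi$ held fixed, so $\varphi(\xi)$ enters as a constant phase. This is where the argument would break if we tried to exchange the roles of $\xi$ and $(x,t)$, but we don't, so there is no genuine obstacle. Summing the finitely many continuous functions $A^{\varphi}_1,\dots,A^{\varphi}_m$ yields continuity of $S^{\varphi}_m f_{\varphi}$ on all of $\mathbf{R}^n\times\mathbf{R}$, and in particular on $\{(x,t);\,t>0\}$, completing the proof.
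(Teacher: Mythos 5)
Your proposal is correct and is essentially the paper's own argument: both apply the dominated convergence theorem using pointwise continuity in $(x,t)$ for fixed $\xi$ (so $\varphi(\xi)$ is only a constant phase, and no continuity of $\varphi$ is needed) together with the integrability of $|\widehat{f}_{\varphi}|$ over the bounded region of integration. The only cosmetic difference is that you first split $S^{\varphi}_m f_{\varphi}$ into the finite sum $\sum_{j\le m} A^{\varphi}_j$ and treat each annulus separately, whereas the paper applies dominated convergence once directly to the integral over $|\xi|<R'_m$.
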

\begin{proof}
 The continuity for each $S^{\varphi}_m f_{\varphi}$ follows from the facts, that for almost every $\xi\in \RR$, the map 
 \begin{equation*}
 (x,t)\mapsto e^{i x\cdot \xi}e^{i t {\varphi}(\xi)}
\widehat{f}_{\varphi} ( \xi )
\end{equation*}
is continuous, and that 
 \begin{equation*}\int_{|\xi|<R'_m}
|e^{i x\cdot \xi}e^{i t {\varphi}(\xi)} \widehat{f}_{\varphi} ( \xi )| \, d\xi =
\int_{|\xi|<R'_m} |\widehat{f}_{\varphi} ( \xi )| \, d\xi <C.
\end{equation*}
\end{proof}

When proving Theorem \ref{a>1}, we first prove that the modulus
of $S^{\varphi}_mf_{\varphi}(x_k, t_k)$ turns to infinity as $k$
goes to infinity. For this reason we note that the 
triangle inequality and \eqref{S^a_mf2} implies that
\begin{multline}\label{summa}
|S^{\varphi}_mf_{\varphi}(x_k, t_k)|\geq \cr
|A^{\varphi}_k(x_k,t_k)|-\Big|\sum_{j
=1}^{k-1}A^{\varphi}_j(x_k,t_k)\Big| -\Big|\sum_{j=k+1}^m
A^{\varphi}_j(x_k,t_k) \Big|.
\end{multline}
We want to estimate the terms in \eqref{summa}. From Lemma
\ref{uppskattning} we get estimates for the first two terms. It
remains to estimate the last term.
\vspace{0.5 cm}

\begin{proof}[Proof of Theorem \ref{a>1}.]$\ $

\vspace{-0.15 cm}

{\flushleft{\textbf{Step $\mathbf{1}$.}}}  For $j>k \geq
2$ we shall estimate $|A_j^{\varphi} (x_k,t_k)|$ in \eqref{variabelbyte}. 
We have to find appropriate estimates for $A_{\varphi}$ and
$B_{\varphi}$ in \eqref{a>=2, integral for n>1}-\eqref{Del 2 av partialint}. By using $t_k - t_j
> 0$ and $ \ R_j< r <R'_j$ it follows from \eqref{Rj1<a<2.3}, \eqref{Fbiss}, triangle inequality and Cauchy-Schwarz inequality that
\begin{multline}\label{absfprima}
|F'_{\varphi}(r,\omega)| \geq  (t_k -t_j)
|\varphi'(r,\omega)|-|x_k - x_j|\cr
>(t_k -t_j)|\varphi'(r,\omega)|-(t_k -t_j)\frac{|\varphi'(r,\omega)|}{2}\cr= \frac{|\varphi'(r,\omega)|}{2}(t_k -t_j).
\end{multline}
From \eqref{vaxande}, \eqref{Rj1<a<2} and
\eqref{absfprima} it follows that
\begin{multline*}
|A_{\varphi}| = \Big |  \Big [\frac{1}{r (\log r )^{3/4}i
F'_{\varphi}(r,\omega)}e^{i F_{\varphi}(r,\omega)}\Big
]^{R'_j}_{R_j}\Big | \cr  \leq \frac{C}{R_j}\Big(\frac{1}{|F'_{\varphi}(R_j,\omega)|}+ \frac{1}{|F'_{\varphi}(R'_j,\omega)
|}\Big)\leq \frac{C}{(t_k-t_j)R_j} \leq C 2^{-j}.
\end{multline*}

\par
\vspace{0.5 cm}
In order to estimate $B_{\varphi}$, using \eqref{deriverade funktioner}, \eqref{Fbiss} and \eqref{absfprima}, we have
\begin{multline*}
 \Big | \frac{d}{dr}\Big (\frac{1}{r (\log r )^{3/4}i
F'_{\varphi}(r,\omega)}\Big )e^{i F_{\varphi}(r,\omega)}\Big |\cr
\leq \frac{C}{r^2|F'_{\varphi}(r,\omega)|}  +
\frac{C|F''_{\varphi}(r,\omega)|}{r|F'_{\varphi}(r,\omega)|^2(\log
r )^{3/4}} < \frac{C}{r^2(t_k-t_j)}.
\end{multline*}
This together with \eqref{Rj1<a<2} gives us
\begin{multline*}
|B_{\varphi}| = \Big |\int_{R_j}^{R'_j} \frac{d}{dr}\Big
(\frac{1}{r (\log r )^{3/4}i F'_{\varphi}(r,\omega)}\Big )e^{i
F_{\varphi}(r,\omega)}\,  d r \Big | \cr \leq
\int_{R_j}^{R'_j}\frac{C}{r^2(t_k-t_j)}\,  d r \leq
\frac{C}{R_j(t_k-t_j)} \leq C 2^{-j}.
\end{multline*}
From the estimates above and the triangle inequality we get
\begin{gather}\label{Ajxk}
|A^{\varphi}_j(x_k, t_k)|\leq
C(|A_{\varphi}|+|B_{\varphi}|)< C2^{-j}, \qquad j> k \geq 2.
\end{gather}
 Here $C$ is independent of $j$ and $k$.
\vspace{0.5 cm}
\medspace

 Using the results from
\eqref{summa}, \eqref{Ajxk}, in combination with Lemma
\ref{uppskattning}, and recalling that $R_j'=R_j^N$, gives us
\begin{multline}\label{slutresultat}
|S^{\varphi}_m f_{\varphi}(x_k, t_k)| \geq c (\log
R'_k)^{1/4}- C'(\log R_k)^{1/4}- C\sum _{k+1}^m 2^{-j}\cr \geq c(\log(R_k'))^{1/4}-\frac{C'}{N^{1/4}}(\log(R_k'))^{1/4}-C \geq c
(\log R'_k)^{1/4},
\end{multline}
 when $m>k$ and $N$ is chosen sufficiently large. Here $c>0$ is independent of $k$.

\vspace{0.5 cm}

{\flushleft{\textbf{Step $\mathbf{2}$.} }} Now it remains to show
that $S^{\varphi} f_{\varphi}$ is continuous when $t>0$, and then
it suffices to prove this continuity on a compact subset $L$ of
$$\{(x,t);\, t>0,\, x\in \RR\}.$$ We want to replace $(x_l, t_l)$
with $(x,t)\in L $ in \eqref{Rj1<a<2} and \eqref{Rj1<a<2.3}. Since we have maximum over all $l$ less than $j$, we can choose 
$j_0<\infty$ large enough such that for all $j>l>j_0$ we have that $t_j<t_l<t$. Hence we may replace $(x_l, t_l)$
with $(x,t)\in L $ on the right-hand sides in \eqref{Rj1<a<2} and \eqref{Rj1<a<2.3} for all $j>j_0$. 
This in turn implies that \eqref{Ajxk} holds when $(x_k, t_k)$ is replaced by $(x,
t) \in L$ and $j>j_0$. We use \eqref{Ajxk} to conclude that
\begin{multline*}
|S^{\varphi}_m f_{\varphi}(x,t) -S^{\varphi} f_{\varphi}(x,t)|\cr = \Big
|(2 \pi)^{-n} \int_{|\xi |< R'_m} e^{i x\cdot \xi}e^{it
\varphi(\xi)}\widehat{f}_{\varphi}(\xi)\,  d \xi - (2
\pi)^{-n} \int_{\RR} {e^{i x\cdot \xi}e^{it
\varphi(\xi)}\widehat{f}_{\varphi}(\xi)}\,  d \xi \Big |\cr
=(2 \pi)^{-n} \Big | \int_{|\xi |> R'_m} e^{i x\cdot
\xi}e^{it \varphi(\xi)}\widehat{f}_{\varphi}(\xi)\,  d \xi \Big |
\leq C\sum_{i=m+1}^{\infty}  2^{-i} = C2^{-m},
\end{multline*}
when $m> j_0$.
Hence $S^{\varphi}_m f_{\varphi}$ converge uniformly to $S^{\varphi} f_{\varphi}$ on every compact set.
\par
\vspace{0.5 cm}

We have now showed that
$S^{\varphi}_m f_{\varphi}$ converge uniformly to $S^{\varphi}
f_{\varphi}$ on every compact set and from Lemma \ref{kontinuitet}
it follows that each $S^{\varphi}_m f_{\varphi}$ is a continuous
function. Therefore it follows that $S^{\varphi} f_{\varphi}$ is
continuous on $\{(x,t);t>0\}$. In particular there is an $N \in
\mathbf{N}$ such that 
\begin{equation*}|S^{\varphi}_m
f_{\varphi}(x_k, t_k)-S^{\varphi}
f_{\varphi}(x_k,t_k)| < 1,
\end{equation*}
when $m>N$. 
Using \eqref{slutresultat}
and the triangle inequality we get 
\begin{multline*} c(\log R'_k)^{1/4} \leq
|S^{\varphi}_m f_{\varphi}(x_k, t_k) | \cr \leq
|S^{\varphi}_m f_{\varphi}(x_k, t_k)-S^{\varphi}
f_{\varphi}(x_k,t_k)| + |S^{\varphi} f_{\varphi}
(x_k,t_k)| <\cr 1+ |S^{\varphi}
f_{\varphi}(x_k,t_k)|.
\end{multline*}
This gives us 
\begin{equation*}
|S^{\varphi} f_{\varphi}(x_k,t_k)| > c(\log R'_k)^{1/4}-1
\rightarrow +\infty \text{ \ as \ } k \rightarrow +\infty.
\end{equation*}
For any fixed $x\in \RR$ we can by Lemma \ref{dense} choose a subsequence $(x_{n_j},t_{n_j})$ of $(x_k,t_k)$ that goes to $(x,0)$ as $j$ turns to infinity. This gives the result.
\end{proof}

\vspace{0.2 cm}
In order to prove Theorem \ref{a=1} we first let $x\in \RR$ be fixed, and consider a modified sequence of $\gamma(t_j)$.
 More precisely, let
\begin{equation}\label{eta}
\eta(t)=\min (h/4,1)\min(\gamma(t), t),
\end{equation}
where $h$ is given by
\eqref{nedat begr}. Then $\eta$ is continuous, strictly
increasing and $\eta(0)=0$. By Lemma \ref{dense} there exist subsequences $(x_{n_j})_1^{\infty}$ and $(t_{n_j})_1^{\infty}$ of $(x_j)_1^{\infty}$ and $(t_j)_1^{\infty}$ respectively such that
\begin{equation*}
|x_{n_j}-x|<\eta(t_{n_j})<\gamma(t_{n_j}).
\end{equation*}
Since $t_{n_j}$ goes to $0$ as $j$ turns to infinity, it follows from \eqref{eta} that 
\begin{equation*}
t_{p_j}-t_{p_{j+1}}\geq (3/h)\eta(t_{p_j}),
\end{equation*}
for some subsequence $(t_{p_j})_{j=1}^{\infty} $ of $(t_{n_j})_{j=1}^{\infty}$.
In the proof of Theorem \ref{a=1} we modify $f_{\varphi}$ in \eqref{f^} into
\begin{equation}\label{f^1}
\widehat{f}_{\varphi,1}(\xi )\equiv | \xi | ^{-n} (\log | \xi | )^{-3/4}
\sum_{j=1}^{\infty} \chi _j(\xi)e^{- i( x_{p_j} \cdot \xi + t_{p_j}
{\varphi}(\xi))},
\end{equation}
where $\chi_j$ is the characteristic function of
\begin{equation*} \Omega_j =\{ \xi \in \RR;
R_j<|\xi|<R'_j\}.\end{equation*}
For $\varphi$ satisfing \eqref{deriverade funktioner} and \eqref{nedat begr} and $j>2$, we replace \eqref{Rj1<a<2} and \eqref{Rj1<a<2.3} by
\begin{equation}\label{Rja=1,1}
R_j >\max_{l<j} \frac{ 2^j}{t_{p_l}-t_{p_j}}.
\end{equation}
If instead $\varphi$ satisfies \eqref{nedat begr} and \eqref{deriverade funktioner1}, then for $j>2$, we replace \eqref{Rj1<a<2} and \eqref{Rj1<a<2.3} by
\begin{equation}\label{Rja=1}  R_j> \Big(
2^{j+2}\max\Big(\frac{1}{t_{p_j}},\frac{1}{\gamma(t_{p_j})}\Big)\Big)^{1/\beta}.
\end{equation}\\[1 ex]

We give the proof of Theorem \ref{a=1} separately depending on which of the conditions \eqref{deriverade funktioner} and \eqref{deriverade funktioner1}, the function $\varphi$ satisfies.

\begin{proof}[Proof of Theorem \ref{a=1} in the case where $\varphi$ satisfies 
\eqref{deriverade funktioner}.]$\ $
\vspace{-0.15 cm}

{\flushleft{\textbf{Step $\mathbf{1}$.}}}
For $j>k \geq 2$ we shall estimate $|A^{\varphi}_j(x_{p_j},t_{p_j})|$ in \eqref{variabelbyte}, where $F_{\varphi}(r,\omega)$ in \eqref{Fa} is replaced by 
\begin{equation}\label{F1}
F_{\varphi ,1}(r,\omega)= r(x_{p_k} - x_{p_j}) \cdot \omega + (t_{p_k}
-t_{p_j})\varphi(r,\omega).
\end{equation} 
We have to find appropriate estimates for $A_{\varphi}$ and $B_{\varphi}$ in \eqref{Del 1 av partialint} and 
\eqref{Del 2 av partialint}, with $F_{\varphi ,1}(r,\omega)$ instead of $F_{\varphi}(r,\omega)$. Since
\begin{equation*}
t_1>t_2>\cdots >0
\end{equation*} and 
\begin{equation*}
t_{p_j}-t_{p_{j+1}}
\geq (3/h)\eta(t_{p_j}),
\end{equation*}
 we have that 
\begin{equation*}
t_{p_k}-t_{p_j}\geq
(3/h)\eta(t_{p_k}).
\end{equation*}
 Using this together with 
\begin{equation*}
|x_{p_k} -
x_{p_j}|\leq |x_{p_k} -x|+|x - x_{p_j}|\leq 2 \eta(t_{p_k}),
\end{equation*} it
follows by the triangle inequality and Cauchy-Schwarz
inequality that
\begin{multline}\label{absfprima1}
|F'_{\varphi,1}(r,\omega)| \geq  (t_{p_k} -t_{p_j})
|\varphi'(r,\omega)|-|x_{p_k} - x_{p_j}|\cr
>(t_{p_k} -t_{p_j})|\varphi'(r,\omega)|-2\eta(t_{p_k})
\geq (t_{p_k}
-t_{p_j})\Big(|\varphi'(r,\omega)|-\frac{2h}{3}\Big)\cr\geq
(t_{p_k} -t_{p_j})\frac{|\varphi'(r,\omega)|}{3}.
\end{multline}
From  \eqref{deriverade funktioner}, \eqref{Rja=1,1} and \eqref{absfprima1}
it follows that
\begin{multline*}
|A_{\varphi}| = \Big |  \Big [\frac{1}{r (\log r )^{3/4}i
F'_{\varphi,1}(r,\omega)}e^{i F_{\varphi ,1}(r,\omega)}\Big
]^{R'_j}_{R_j}\Big | \cr  \leq \frac{C}{R_j}\Big(\frac{1}{|F'_{\varphi,1}(R_j,\omega)|}+\frac{1}{|F'_{\varphi,1}(R'_j,\omega)|}\Big)\leq \frac{C}{(t_{p_k}-t_{p_j})hR_j} \leq C 2^{-j}.
\end{multline*}
\par
\vspace{0.5 cm}

In order to estimate $B_{\varphi}$, we have 
\begin{multline*}
 \Big | \frac{d}{dr}\Big (\frac{1}{r (\log r )^{3/4}i
F'_{\varphi,1}(r,\omega)}\Big )e^{i F_{\varphi ,1}(r,\omega)}\Big |\cr
\leq \frac{C}{r^2|F'_{\varphi,1}(r,\omega)|}  +
\frac{C|F''_{\varphi,1}(r,\omega)|}{r|F'_{\varphi,1}(r,\omega)|^2(\log
r)^{3/4}} \cr
\leq \frac{C}{r^2(t_{p_k}-t_{p_j})|\varphi'(r,\omega)|}  +
\frac{C|\varphi''(r,\omega)|}{r(t_{p_k}-t_{p_j})|\varphi'(r,\omega)|^2(\log
r)^{3/4}}\cr
< \frac{C}{r^{2}(t_{p_k}-t_{p_j})}.
\end{multline*}
This together with \eqref{Rja=1,1} gives us
\begin{multline*}
|B_{\varphi}| = \Big |\int_{R_j}^{R'_j} \frac{d}{dr}\Big
(\frac{1}{r (\log r )^{3/4}i F'_{\varphi,1}(r,\omega)}\Big )e^{i
F_{\varphi ,1}(r,\omega)}\,  d r \Big | \cr \leq
\int_{R_j}^{R'_j}\frac{C}{r^{2}(t_{p_k}-t_{p_j})}\,  d r \leq
\frac{C}{R_j(t_{p_k}-t_{p_j})} \leq C 2^{-j}.
\end{multline*}
From the estimates above and the triangle inequality we get
\begin{gather}\label{Ajxk1der}
|A^{\varphi}_j(x_{p_k}, t_{p_k})|\leq C(|A_{\varphi}|+|B_{\varphi}|)< C2^{-j},\qquad j> k \geq 2.
\end{gather}
 Here $C$ is independent of $j$ and $k$. 
\vspace{0.5 cm}
\medspace

Using the results from \eqref{summa},
\eqref{Ajxk1der}, in combination with
Lemma \ref{uppskattning}, and recalling that $R_j'=R_j^N$, gives
\begin{multline}\label{slutresultat1der}
|S^{\varphi}_m f_{\varphi ,1}(x_{p_k}, t_{p_k})| \geq c (\log
R'_k)^{1/4}- C'(\log R_k)^{1/4}- C\sum _{k+1}^m 2^{-j}\cr  \geq c (\log R'_k)^{1/4},
\end{multline}
 when $m>k$ and $N$ is sufficiently large. Here $c>0$ is independent of $k$.
\vspace{0.5 cm}

{\flushleft{\textbf{Step $\mathbf{2}$.}}} By similar arguments as in the last part of the proof of Theorem \ref{a>1} it follows that $S^{\varphi}f_{\varphi ,1}$ is continuous on each compact subset of 
\begin{equation*}
L=\{(x,t); \, t>0\}
\end{equation*}
and 
\begin{equation*}
|S^{\varphi}f_{\varphi ,1}(x_{p_k},t_{p_k})|\rightarrow +\infty \quad \text{ as } \quad k \rightarrow +\infty.
\end{equation*}
This gives the result for $\varphi$ satisfying
\eqref{deriverade funktioner}.
\end{proof}

\vspace{0.2 cm}

\begin{proof}[Proof of Theorem \ref{a=1} in the case where $\varphi$ satisfies 
\eqref{deriverade funktioner1}.]$\ $
\vspace{-0.15 cm} {\flushleft{\textbf{Step $\mathbf{1}$.}}} For $j>k\geq 2$ we estimate $A^{\varphi}_j(x_{p_k},t_{p_k})$. Let $\eta$, $f_{\varphi ,1}$ and 
$F_{\varphi ,1}(r,\omega)$ be defined by \eqref{eta}, \eqref{f^1} and \eqref{F1} respectively. Since 
\begin{equation*}
t_1>t_2>\cdots >0
\end{equation*} 
and 
\begin{equation*}
t_{p_j}-t_{p_{j+1}}
\geq (3/h)\eta(t_{p_j}),
\end{equation*} we have that 
\begin{equation*}t_{p_k}-t_{p_j}\geq
(3/h)\eta(t_{p_k}).
\end{equation*} 
Using this together with 
\begin{equation*} |x_{p_k} -
x_{p_j}|\leq |x_{p_k} -x|+|x - x_{p_j}|\leq 2 \eta(t_{p_k}),
\end{equation*}  it
follows by the triangle inequality and Cauchy-Schwarz
inequality that
\begin{multline}\label{F'_1}
|F'_{\varphi,1}(r,\omega)|= |(x_{p_k} - x_{p_j}) \cdot \omega +
\varphi'(r,\omega)(t_{p_k} -t_{p_j}) |\cr\geq h(t_{p_k} -t_{p_j})
-|x_{p_k} - x_{p_j}|\geq\eta(t_{p_k})\geq \eta(t_{p_j}).
\end{multline}
Then we estimate each part of equation \eqref{a>=2, integral for n>1} by using \eqref{deriverade funktioner1},  \eqref{Rja=1} and \eqref{F'_1}  and see that
\begin{multline*}
|A_{\varphi}|=\Big |  \Big [\frac{1}{r (\log r )^{3/4}i F'_{\varphi,1}(r,\omega)}e^{i
F_{\varphi ,1}(r,\omega)}\Big ]^{R'_j}_{R_j}\Big |\cr \leq \frac{C}{R_j(\inf_{|\omega|=1}
(|F'_{\varphi,1}(R_j,\omega)|,|F'_{\varphi,1}(R'_j,\omega)|))}<
\frac{C}{R_j\eta(t_{p_j})} < C 2^{-j}.
\end{multline*}
\par
\vspace{0.5 cm}

In order to estimate $B_{\varphi}$, we have 
\begin{multline*}
 \Big | \frac{d}{dr}\Big (\frac{1}{r (\log r )^{3/4}i
F_{\varphi ,1}'(r,\omega)}\Big )e^{i F_{\varphi ,1}(r,\omega)}\Big |\cr
\leq \frac{C}{r^2|F'_{\varphi,1}(r,\omega)|}  +
\frac{C|F''_{\varphi,1}(r,\omega)|}{r|F'_{\varphi,1}(r,\omega)|^2(\log
r)^{3/4} } \cr \leq \frac{C}{r^2\eta(t_{p_j})}  +
\frac{C|\varphi''(r,\omega)|}{r(\log r)^{3/4}\eta(t_{p_j})} \leq
\frac{C}{r^{1+\beta}\eta(t_{p_j})}.
\end{multline*}

This together with \eqref{Rja=1} gives us
\begin{multline*}
|B_{\varphi}|=\Big |\int_{R_j}^{R'_j} \frac{d}{dr}\Big (\frac{1}{r
(\log r )^{3/4}i F_{\varphi ,1}'(r,\omega)}\Big )e^{i
F_{\varphi ,1}(r,\omega)}\, d r \Big | \cr \leq \int_{R_j}^{R'_j}
\frac{C}{r^{\beta+1}\eta(t_{p_j})}\, d r \leq \frac{C}{
R_j^{\beta}\eta(t_{p_j})}<  C2^{-j}.
\end{multline*}
From the estimates above and the triangle inequality we get
\begin{gather}\label{Ajxk1}
|A^{\varphi}_j(x_{p_k}, t_{p_k})|= C(|A_{\varphi}|+|B_{\varphi}|) < C 2^{-j}
\end{gather}
for $j> k \geq 2$. Here $C$ is independent of $j$ and $k$. 
\medspace
\vspace{0.5 cm}

Using the result
from  \eqref{Ajxk1} in combination with
 Lemma \ref{uppskattning}, and recalling that $R_j' = R_j^N$, now gives
\begin{multline}\label{um>clogr1}
|S^{\varphi}_mf_{\varphi ,1}(x_{p_k}, t_{p_k})| \geq c (\log
R'_k)^{1/4}- C'(\log R_k)^{1/4}- C\sum _{k+1}^m 2^{-j}\cr \geq c (\log R'_k)^{1/4}
\end{multline}
when $m>k$ and $N$ is sufficiently large. Here $c>0$ is independent of $k$.
\par
\vspace{0.5 cm}

{\flushleft{\textbf{Step $\mathbf{2}$.}}} By similar arguments as in the last part of the proof of Theorem \ref{a>1} it follows that $S^{\varphi}f_{\varphi ,1}$ is continuous on each compact subset of 
\begin{equation*}
L=\{(x,t); \, t>0\}
\end{equation*}
and 
\begin{equation*}
|S^{\varphi}f_{\varphi ,1}(x_{p_k},t_{p_k})|\rightarrow +\infty \quad \text{ as } \quad k \rightarrow +\infty.
\end{equation*} This gives the result for $\varphi$ satisfying
\eqref{deriverade funktioner1}.
\end{proof}

\vspace{0.5 cm}


\begin{thebibliography}{2000}

\bibitem{Bourgain} {J. Bourgain,}
\emph{ A remark on Schr{\"o}dinger operators,} Israel J. Math.
\textbf{77} (1992), no. 1-2, 1-16.

\bibitem{Kenig} {C. E. Kenig, G. Ponce, L Vega,}
\emph{ Oscillatory integrals and regularity of dispersive equations,} Indiana Univ. Math. J. \textbf{40} (1991), no. 1,
 33-69.

\bibitem{artikel} {P. Sj{\"o}gren and P. Sj{\"o}lin,}
\emph{Convergence properties for the time-dependent
Schr{\"o}dinger equation,} Ann. Acad. Sci. Fenn. Ser. A I Math.
\textbf{14} (1989), no. 1, 13-25.

\bibitem{ref2} {P. Sj{\"o}lin,}
\emph{Regularity of solutions to the Schr{\"o}dinger equation}
Duke Math. J. \textbf{55} (1987), no. 3, 699-715.

\bibitem{LP} {P. Sj{\"o}lin,}
\emph{$L\sp p$ maximal estimates for solutions to the
Schr{\"o}dinger equation,} Math. Scand. \textbf{81} (1997), no. 1,
35-68.

\bibitem{Counter} {P. Sj{\"o}lin,}
\emph{A counter-example concerning maximal estimates for solutions
to equations of Schr{\"o}dinger type,} Indiana Univ. Math. J.
\textbf{47} (1998), no. 2, 593-599.


\bibitem{Hom} {P. Sj{\"o}lin,}
\emph{Homogeneous maximal estimates for solutions to the
Schr{\"o}dinger equation,} Bull. Inst. Math. Acad. Sinica
\textbf{30} (2002), no. 2, 133-140.

\bibitem{SteinWeiss} {E. M. Stein and G. Weiss,}
\emph{Introduction to Fourier analysis on Euclidean spaces.}
Princeton, New Jersey, 1971.

\bibitem{Sharp} {B. G. Walther,}
\emph{Sharpness results for $L\sp 2$-smoothing of oscillatory
integrals,} Indiana Univ. Math. J. \textbf{50} (2001), no. 1,
655-669.

\bibitem{Sharpmax} {B. G. Walther,}
\emph{Sharp maximal estimates for doubly oscillatory integrals,} Proc. Amer. Math. Soc. \textbf{130} (2002),
no. 12, 3641-3650.



\vspace{0.2cm}

\end{thebibliography}
\end{document}